\newcommand{\E}{\mathcal{E}}
\renewcommand{\phi}{\varphi}
\newcommand{\gal}{\textnormal{Gal}}
\def\and{{\rm and}}
\def\r{{\rm r}}
\newcommand{\bfa}{{\mathbf a}}
\newcommand{\bfA}{{\mathbf A}}
\newcommand{\bfT}{{\mathbf T}}
\newtheorem{lemma}{Lemma}[section]
\newtheorem{theorem}[lemma]{Theorem}
\newtheorem{proposition}[lemma]{Proposition}
\theoremstyle{remark}
\newtheorem{remark}[lemma]{Remark}
\theoremstyle{definition}
\newtheorem{necs-cond}[lemma]{Necessary Condition}
\begin{document}

\title{Irreducibility and embedding problems}%

\author{Lior Bary-Soroker}%
\address{
Institut f\"ur Experimentelle Mathematik,
Universit\"at Duisburg-Essen,
Ellernstrasse 29,
D-45326 Essen,
Germany}
\email{lior.bary-soroker@uni-due.de}%
%
%\thanks{}%
\subjclass[2000]{Primary 12E30,12E25}%
%\keywords{}%

\date{22/09/2010}%
%\dedicatory{}%
%\commby{}%
% ----------------------------------------------------------------
\begin{abstract}
We study irreducible specializations, in particular when group-preserving specializations may not exist. We obtain a criterion in terms of embedding problems. We include several applications to analogs of Schinzel's hypothesis H and to the theory of Hilbertian fields.
\end{abstract}
\maketitle

\section{Introduction and results}
A Hilbertian field $K$ is defined by the property that every finite family of polynomials 
\[
f_1(T_1, \ldots, T_r,X), \ldots f_s(T_1, \ldots, T_r, X)
\]
in the ring $K[T_1, \ldots, T_r,X]$ (where $r\geq 1$ is arbitrary) that are irreducible and separable in $X$ admits an irreducible specialization: $(a_1,\ldots, a_r)\in K^r$ such that all $f_i(a_1, \ldots, a_r,X)$ are irreducible in $K[X]$. The set of irreducible specializations is Zariski dense in $K^r$.  Hilbert's irreducibility theorem asserts that a number field is Hilbertian, and Kuyk's theorem asserts that Hilbertianity is preserved under abelian extensions, for a more general permanence criterion, the so called Haran's diamond theorem, see \cite{Haran1999Invent}.

We write in short $\bfT$ for $(T_1, \ldots, T_r)$, and similarly for other tuples.
If $K$ is Hilbertian, then a strictly stronger specialization property holds, namely there exist group-preserving specializations: $\bfa\in K^r$ such that 
\[
\gal( f(\bfT,X), K(\bfT)) \cong \gal(f(\bfa,X), K) 
\]
as permutation groups, where $f(\bfT,X) = \prod_{i=1}^s f_i(\bfT,X)$. This implies that in order to realize a finite group over a Hilbertian field $K$ it suffices to realize the group over $K(\bfT)$, which is easier since we have more degrees of freedom and geometry comes into the play, cf.\ \cite{Serre1992,MalleMatzat1999,Voelklein1996}.  
 
Nevertheless, in many applications the irreducible specialization property suffices. For example, in \cite{Scharlau1987,Waterhouse1987} Scharlau and Waterhouse (independently) prove that over a Hilbertian field every non-degenerate quadratic form is isomorphic to a scaled trace form. In the proof the irreducible specialization property is applied to the characteristic polynomial $f(\bfT,X)$  of $\mathcal{T}B$, where $\mathcal{T} = (T_{i,j})$ is the generic  symmetric matrix of order $n$ (i.e., the entries $T_{i,j}$ are variables  subject to the relations $T_{i,j}=T_{j,i}$) and $B$ is a non-degenerate symmetric matrix of order $n$ with coefficients lying in $K$. 

In \cite{Bary-SorokerKelmer} Kelmer and the author show that $\gal(f(\bfT,X), \tilde{K}(\bfT)) \cong S_n$, where $\tilde K$ denotes an algebraic closure of $K$. Thus the result holds true over a much wider family of fields, see below.  In this case one can think of $f$ as ``the most irreducible-in-$X$'' polynomial. This is what one expects to come out of generic constructions. 

Another application appears in \cite{Bary-Soroker2009PAMS} where the author addresses an analog of Dirichlet's theorem on primes in arithmetic progressions for polynomial rings. Let $a(X), b(X)\in K[X]$ be relatively prime polynomials, then for any $m\gg \deg(a), \deg (b)$, there exists $c(X)$ of degree $m$ (let $n=m-\deg(b)=\deg_X( a(X)+Tb(X)c(X))$) such that 
\[
\gal(a(X) + Tb(X)c(X), \tilde{K}(T)) \cong S_n.
\]
So an irreducible specialization induces an irreducible in the `arithmetic progression' $a(X) + b(X) K[X]$. 

In both of these applications the property the field $K$ needs to satisfy is the  irreducible specialization property for ``the most irreducible-in-$X$'' polynomials. \cite{Bary-Soroker2009PAMS} gives a sufficient condition to have irreducible specializations in this case in terms of pseudo algebraically closed (PAC) extensions.
\begin{theorem}
\label{thm:IrrSpecSn}
Let $K$ be a field and let $f(\bfT, X)\in K[\bfT,X]$ be a separable polynomial of degree $n$ in $X$ such that 
\[
\gal(f(\bfT,X), \tilde{K}(\bfT)) \cong S_n.
\]
Assume $K$ has a PAC extension having a separable extension of degree $n$. Then there exists a Zariski dense set of $\bfa\in K^r$ such that $f(\bfa,X)$ is irreducible in $K[X]$. 
\end{theorem}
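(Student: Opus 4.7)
\emph{Reduction and setup.} I would first reduce to the case $r=1$ via a standard iterated specialization of $T_2,\dots,T_r$ that preserves the geometric Galois group $S_n$. Writing $F=K(T)[X]/(f(T,X))$ and letting $L/K(T)$ be its Galois closure, the hypothesis $\gal(f,\tilde K(T))\cong S_n$ being the full symmetric group forces the arithmetic Galois group to equal $S_n$ as well and $L/K$ to be regular. For $a\in K$ outside the (finite) ramification locus, $f(a,X)$ factors over $K$ according to the orbits of the decomposition group $D_a\le S_n$ on the $n$ roots, so $f(a,X)$ is irreducible over $K$ iff $D_a$ is transitive. Crucially, for $a\in K\subseteq M$ one has $D_a^M\le D_a^K$, so it suffices to produce a Zariski-dense set of $a\in K$ with $f(a,X)$ irreducible over $M$.

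\emph{Transitive template from $M'/M$.} The separable degree-$n$ extension $M'/M$ corresponds to an open index-$n$ subgroup $\gal(M')\le \gal(M)$, yielding via the coset action a continuous homomorphism $\rho\colon\gal(M)\to S_n$ with transitive image $H$. The plan is to produce $a\in K$ for which the decomposition group $D_a^M$ in $\gal(LM/M(T))\cong S_n$ is conjugate to $H$; then transitivity of $H$ forces $f(a,X)$ irreducible over $M$, hence over $K$.

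\emph{PAC argument and main obstacle.} To produce such $a$, I would construct a $K$-variety $V$ --- a suitable twist of the cover $\spec(L^H)\to \bbA^1_K$ using the cocycle data of $\rho$ --- whose $M$-rational points parametrize specializations $a\in K$ with $D_a^M$ conjugate to $H$. The variety $V$ is geometrically integral precisely because $\gal(L/K(T))$ is the \emph{full} $S_n$: any strictly smaller Galois group would produce spurious $\tilde K$-components in the twist. Applying the PAC property of $M$ to the geometrically integral $V_M$ yields that $V(M)$ is Zariski-dense; by construction, the natural projection sends each $V(M)$-point to an element of $K\subset\bbA^1(K)$ (not merely of $M$), giving the required Zariski-dense set of specializations. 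The main technical obstacle is exactly this descent --- engineering the twisted variety $V$ so that its $M$-rational points encode $K$-rational base data via $\rho$, and verifying geometric integrality of the twist --- both of which rely essentially on the $S_n$-maximality hypothesis on the geometric Galois group.
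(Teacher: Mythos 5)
The crux of your plan breaks at the descent step. The twist you describe is built from the cocycle attached to $\rho\colon\gal(M)\to S_n$, so it is naturally an $M$-variety (not a $K$-variety), and nothing in such a construction constrains the image in $\mathbb{A}^1$ of an $M$-rational point to lie in $K$: it is just an element of $M$. Getting specialization values in $K$ is precisely what the defining property of a PAC \emph{extension} $M/K$ (equivalently, the lifting property for double embedding problems quoted in Section 2.3) supplies when applied to the twisted variety and its map to $\mathbb{A}^r$; it cannot be ``engineered into $V$'' so that \emph{every} $M$-point projects into $K$. Indeed, if your version worked, the theorem would follow from the mere existence of a field extension $M\supseteq K$ which is PAC and has a degree-$n$ separable extension, and that strengthening is false: take $K$ algebraically closed and $f=X^2-T$; such fields $M$ exist, yet $f(a,X)$ is reducible for every $a\in K$. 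As written, your argument invokes only PAC-ness of $M$ (for density of $V(M)$) and then asserts $K$-rationality of the projections ``by construction,'' so it never uses the hypothesis that $M/K$ is a PAC extension. (Separately, Zariski-density of the good $\bfa$ needs the refined lifting statement with $q(\bfa)\neq 0$, not just one point.) A secondary flaw is the object you name: the subcover of the Galois closure corresponding to $H$ has geometric automorphism group $N_{S_n}(H)/H$, so the cocycle induced by $\rho$ (image $H$) twists it trivially, and in any case rational points over $a$ on (any twist of) the $H$-subcover detect ``the decomposition group at $a$ is \emph{contained in} a conjugate of $H$,'' which does not give transitivity. The correct object is a twist of the full Galois cover, i.e.\ the fixed field of the graph of $\rho$ in $LM_s$: its $M$-points over $a$ force the specialization representation $\gal(M)\to S_n$ at $a$ to be conjugate to $\rho$, hence transitive. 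This field-crossing construction is exactly the engine behind the lifting property the paper uses.

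Your preliminary reduction to $r=1$ is also incorrect as stated: specializations of $T_2,\dots,T_r$ preserving the geometric group need not exist. For $f(T_1,T_2,X)=X^2-T_2$ (characteristic $\neq 2$) the geometric group over $\tilde K(T_1,T_2)$ is $S_2$, but for every $a_2\in\tilde K$ the polynomial $X^2-a_2$ splits over $\tilde K(T_1)$, so no specialization of $T_2$ preserves it. (Restricting to a general line would repair this, but the reduction is unnecessary: the paper works with all $r$ variables throughout.) For comparison, the paper obtains Theorem~\ref{thm:IrrSpecSn} as the special case of Theorem~\ref{thm:PACEXT} for the embedding problem $(\gal(M)\to 1,\ S_n\to 1)$, with a weak solution of transitive image furnished by the degree-$n$ separable extension of $M$ and $C=S_n$; Theorem~\ref{thm:PACEXT} in turn rests on the lifting property of PAC extensions, which is the correctly executed form of the twisting argument you are attempting.
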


A PAC extension $M/K$ is defined by the property that for every absolutely irreducible $M$-variety $V$ of dimension $r\geq 1$ and for every dominating separable $M$-map $\nu\colon V\to \mathbb{A}^{r}$ there exists $\mathbf{a}\in V(M)$ such that $\nu(\mathbf{\mathbf{a}})\in K^r$. 

In \cite{Bary-SorokerKelmer} Kelmer and the author prove that some interesting families of algebraic extensions of a countable Hilbertian field have PAC extensions. For example, let $K$ be a pro-solvable extension of a countable Hilbertian field. Then there exists a PAC extension $M/K$ having a separable extension of arbitrary degree $n\geq 5$. In particular we can take $K=\mathbb{Q}_{sol}$. This field is not Hilbertian because it has no quadratic extensions, so $X^2-T$ has no irreducible specialization. Also $S_n$ does not occur as Galois group over $\mathbb{Q}_{sol}$. So there is no group-preserving specialization for a polynomial as in Theorem~\ref{thm:IrrSpecSn}, although there are irreducible specializations.  

In this work we study more deeply irreducible specializations, in particular when group-preserving  specializations do not exist.

Let $f_1(\bfT,X), \ldots, f_s(\bfT,X)\in K[\bfT,X]$ be distinct irreducible polynomials that are separable in $X$ and let $f=f_1\cdots f_s$. Then $f$ is separable in $X$. Now $f$ defines the \textbf{associated geometric embedding problem} for $K$ that we denote by $\E(f,K)$: Let $F$ be the splitting field of $f(\bfT,X)$ over $K(\bfT)$ and let $L = F\cap \tilde K$. Then both $F/K(\bfT)$ and $L/K$ are Galois extensions. Let $H = \gal(F/K(\bfT))$, $G = \gal(L/K)$, and let $\alpha\colon H\to G$ and $\rho\colon \gal(K)\to G$ be the restriction maps. The diagram 
\[
\xymatrix{
	&\gal(K)\ar[d]^{\rho}\\
H\ar[r]^{\alpha}
	&G
}
\]
defines the embedding problem $\E(f,K)$. 

If $\gal(f,\tilde{K}(\bfT)) = S_n$, for some polynomial $f(\bfT,X)$ of degree $n$ in $X$, then the associated geometric embedding problem is 
\[
\xymatrix{
	&\gal(K)\ar[d]^{\rho}\\
S_n\ar[r]^{\alpha}
	&1.
}
\]

\begin{theorem}
\label{thm:PACEXT}
Let $K$ be a field, $f_1(\bfT,X), \ldots, f_s(\bfT,X)\in K[\bfT,X]$ distinct irreducible polynomials that are separable in $X$, $f=f_1\cdots f_s$, and for each $i$ let $x_i$ be a root of $f_i(\bfT,X)$ in a fixed algebraic closure of $K(\bfT)$.  
Assume there exist a PAC extension $M/K$ and a solution $\eta\colon \gal(M) \to \gal(f(\bfT,X), M(\bfT))$ of $\E(f,M) = (\rho,\alpha)$ with image $H_0 = \eta(\gal(M))$ such that 
\[
(H_0 \cap C) x_i = C x_i,
\]
for some $\ker\alpha \leq C \leq \gal(f(\bfT,X),M(\bfT))$.
Then there exists a Zariski dense set of $\bfa \in K^r$ such that all $f_i(\bfa, X)$ are irreducible.
\end{theorem}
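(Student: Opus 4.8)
I will work entirely over $M$ with the embedding problem $\E(f,M)=(\rho,\alpha)$. Write $F$ for the splitting field of $f(\bfT,X)$ over $M(\bfT)$, $L=F\cap\tilde M$, $H=\gal(f(\bfT,X),M(\bfT))=\gal(F/M(\bfT))$, $G=\gal(L/M)$, and for each $i$ let $H_i=\gal(F/M(\bfT)(x_i))$ be the stabilizer of $x_i$ in the transitive $H$-set of roots of $f_i$. Since $L/M$ is finite Galois, $\rho$ is onto, so $\alpha\eta=\rho$ gives $\alpha(H_0)=G$. The first step is a purely group-theoretic reduction: \emph{the hypothesis on $C$ forces $H_0H_i=H$, i.e.\ $H_0$ acts transitively on the roots of $f_i$, for every $i$.} Indeed $(H_0\cap C)x_i=Cx_i$ says $C=(H_0\cap C)(C\cap H_i)$, hence $H_0H_i\supseteq C\supseteq\ker\alpha$; given $h\in H$ choose $h_0\in H_0$ with $\alpha(h_0)=\alpha(h)$ (possible as $\alpha(H_0)=G$), then $h_0^{-1}h\in\ker\alpha\subseteq H_0H_i$, so $h\in H_0H_i$. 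It therefore suffices to produce a Zariski-dense set of $\bfa\in K^r$ each admitting an unramified place $\psi$ of $F$ over the $M(\bfT)$-place $\bfT\mapsto\bfa$ whose decomposition group is conjugate to $H_0$: away from the proper closed locus where $f$ has a bad $X$-discriminant or $X$-leading coefficient, $f_i(\bfa,X)$ is separable and its irreducible factors over $M$ correspond to the orbits of this decomposition group on the roots of $f_i$, which by the above form a single orbit, so $f_i(\bfa,X)$ is irreducible over $M$, hence over $K$.

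To realize such specializations I pass to the twisted cover attached to $\eta$. Fixing embeddings into a common algebraic closure of $M(\bfT)$, set $\Fhat=F\cdot\tilde M$; then $\Fhat/M(\bfT)$ is Galois with group the fiber product $H\times_G\gal(M)$ (via $h\mapsto h|_L$ and $g\mapsto\rho(g)$), and $\gal(\Fhat/\tilde M(\bfT))=\ker\alpha\times1$. The solution $\eta$ gives the closed subgroup $S=\{(\eta(g),g):g\in\gal(M)\}$, the graph of $\eta$, and I put $E=\Fhat^{S}$. A short computation gives $[E:M(\bfT)]=|\ker\alpha|$, and, crucially, $E\cap\tilde M=M$: the latter holds because $S$ together with $\ker\alpha\times1$ generates all of $H\times_G\gal(M)$, which uses precisely $H_0\cdot\ker\alpha=H$, i.e.\ $\alpha(H_0)=G$. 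Since moreover $E/M$ is separable ($f$ being separable in $X$), $E/M$ is a regular extension, so $E=M(V)$ for an absolutely irreducible $M$-variety $V$ of dimension $r$, and the inclusion $M(\bfT)\subseteq E$ yields a finite dominant separable morphism $\nu\colon V\to\mathbb{A}^r_M$.

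Now invoke the PAC property of $M/K$. For any nonempty Zariski-open $U\subseteq\mathbb{A}^r$, delete from $U$ the branch loci of $\nu$ and of $F/M(\bfT)$ together with the bad loci from the first paragraph, obtaining a nonempty open $U'$; applying the PAC property to $\nu\colon\nu^{-1}(U')\to\mathbb{A}^r$ gives $\mathbf b\in\nu^{-1}(U')(M)$ with $\bfa:=\nu(\mathbf b)\in K^r$. As $U$ was arbitrary, such $\bfa$ form a Zariski-dense subset of $K^r$, and each avoids all the bad loci. Fix one such $\bfa$ and $\mathbf b$ and extend the place $\mathbf b$ of $E$ to a place $\psi$ of $\Fhat$ over $\bfa$. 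Since $\psi|_{\tilde M}$ is an $M$-automorphism of $\tilde M$ and $\bfa\in K^r\subseteq M^r$, replacing $\psi$ by $\psi\circ s$ for a suitable $s\in S$ (which fixes $E$, so $\psi|_E=\mathbf b$ is unchanged) we may assume $\psi|_{\tilde M}=\mathrm{id}$; then the decomposition group $\hat D$ of $\psi$ on $\Fhat$ is the graph $\{(j(g),g):g\in\gal(M)\}$ of a solution $j$ of $\E(f,M)$. Because $\psi|_E$ is $M$-rational, the Galois theory of places in the tower $M(\bfT)\subseteq E=\Fhat^{S}\subseteq\Fhat$ forces $\hat D$ into a conjugate of $S$; two graphs of sections of $H\times_G\gal(M)\to\gal(M)$ with one contained in the other are equal, so $\hat D$ is conjugate to $S$. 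Restricting to $F$, where $\psi$ is unramified over $\bfa$, the decomposition group of $\psi|_F$ is the image of $\hat D$ under $H\times_G\gal(M)\to H$, hence conjugate to the first-coordinate image of $S$, namely $H_0$. Combined with the first paragraph, $f_i(\bfa,X)$ is irreducible over $M$, hence over $K$, for all $i$, which completes the proof.

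The step I expect to be the crux is the last one: converting "$\mathbf b$ is an $M$-rational point of the twisted cover $V$" into "the decomposition group at $\bfa$ is conjugate to $H_0$". This requires setting up the place $\psi$ on $\Fhat$ with care, handling the normalization by $S$, and invoking good/unramified reduction of the cover so that decomposition groups and residue fields behave as in the generic fiber; the double-coset bookkeeping identifying $\hat D$ with a conjugate of $S$ is exactly where the hypothesis that $\eta$ is a genuine solution of $\E(f,M)$—not merely that $H_0$ surjects onto $G$—is used. By comparison, the regularity of $E$ over $M$ is a routine computation inside $H\times_G\gal(M)$, and the Zariski density is the standard "shrink the base and reapply the PAC property" argument.
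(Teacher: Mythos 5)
Your first paragraph contains the decisive gap. You declare that the roots of $f_i(\bfT,X)$ form a \emph{transitive} $H$-set for $H=\gal(f(\bfT,X),M(\bfT))$, i.e.\ you implicitly assume $f_i$ stays irreducible over $M(\bfT)$. The hypothesis only gives irreducibility over $K(\bfT)$, and $M/K$ is in general a proper (even infinite) algebraic extension over which $f_i$ may factor. Your group computation is fine as far as it goes: $(H_0\cap C)x_i=Cx_i$ together with $\alpha(H_0)=G$ does give $H=H_0\,\mathrm{Stab}_H(x_i)$, but this only says $H_0$ is transitive on the orbit $Hx_i$, i.e.\ on the roots of the irreducible factor of $f_i$ over $M(\bfT)$ containing $x_i$ --- not on all roots of $f_i$. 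Consequently your target statement ``$f_i(\bfa,X)$ is irreducible over $M$, hence over $K$'' is the wrong intermediate claim: it is simply false in the situations the theorem is designed for. In the paper's application to Theorem~\ref{thm:arth}, the polynomials $f_i\circ\mathcal{G}$ are typically reducible over $M(\bfA)$ (the roots of $f$ split into $\gal(N/M)$-orbits $\Omega_1,\dots,\Omega_S$ with $S\geq s$, and $C$ is the stabilizer of just one block $\{1,\dots,n\}\times\Omega_i$), so no specialization can be irreducible over $M$; the whole point of the $C$-condition is to allow transitivity only on one $M(\bfT)$-irreducible piece while still concluding irreducibility over $K$. Note also that after the first paragraph your argument never uses the $K$-irreducibility of $f_i$ and never relates the specialization to $\gal(K)$ at all --- $K$ enters only through $\nu(\mathbf b)\in K^r$ --- so it cannot possibly detect irreducibility over $K$ beyond irreducibility over $M$.

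What is missing is precisely the ingredient the paper extracts from the lifting property of PAC extensions for the \emph{double} embedding problem $\E(f,M/K)$: a geometric solution $(\Psi^*,\Phi^*)$ with $\Psi^*=\eta$, giving a subgroup $H_1=\Phi^*(\gal(K))\leq\gal(F/K(\bfT))$ that (i) contains $H_0$ and (ii) surjects onto $\gal(L/K)$ via $\beta$. Fact (ii) supplies, for any root $hx_i$ with $h\in\gal(F/K(\bfT))$, an element $h_1\in H_1$ with $h_1^{-1}h\in\ker\beta=\ker\alpha\leq C$, and then the $C$-condition plus (i) finishes transitivity of $H_1$ on the \emph{full} root set of $f_i$ over $K(\bfT)$, hence irreducibility of $f_i(\bfa,X)$ over $K$. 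Your twisted-cover construction over $M$ (the fixed field $E$ of the graph $S$ of $\eta$, regularity from $H_0\ker\alpha=H$, and the identification of the decomposition group over $M(\bfT)$ with a conjugate of $H_0$) is essentially a re-derivation of the PAC-field version of this lemma and is fine in spirit, but it only controls the decomposition group over $M(\bfT)$. To repair the argument you would have to additionally track the decomposition group over $K(\bfT)$ at the place $\bfT\mapsto\bfa$ (normalized to be the identity on $L$), observe that for an unramified $K$-rational specialization it surjects onto $\gal(L/K)$, check it contains a conjugate of $H_0$ compatibly with the conjugated $C$-condition, and then run the group-theoretic step over $K$ --- which is exactly what the paper's double-embedding-problem formalism packages. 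As written, the proposal does not prove the theorem.
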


\begin{remark}
Here are two properties of $\eta$ which imply the existence of $C$ as in Theorem~\ref{thm:PACEXT}.

If $\eta$ is \textbf{surjective}, i.e.\ $H_0 = \gal(f(\bfT,X),M(\bfT))$, then trivially we have $(H_0\cap C) x_i = C x_i$. 

Assume $H_0$ acts \textbf{transitively} on the set $R_i$ of the roots of $f_i(\bfT,X)$. Let $C = \gal(f(\bfT,X), M(\bfT))$. We have $(H_0\cap C) x_i = H_0 x_i = R_i$. On the other hand, $H_0x_i\subseteq Cx_i\subseteq R_i$. So $(H_0\cap C) x_i = Cx_i (=R_i)$. 
\end{remark}

\begin{remark}
Theorem~\ref{thm:PACEXT} generalizes Theorem~\ref{thm:IrrSpecSn} since under Theorem~\ref{thm:IrrSpecSn} assumptions, $\E(f,M) = (\gal(M)\to 1, S_n\to 1)$. This embedding problem has a  solution the image of whose is transitive if and only if $M$ has a separable extension of degree $n$. 
\end{remark}

\begin{remark}
In the special case when $K$ is a PAC field (i.e. $K=M$) and $C=\gal(f(\bfT,X),M(\bfT))$, Theorem~\ref{thm:PACEXT} becomes sharp, see \cite{Bary-SorokerIrrVal}. 
\end{remark}

The proof of Theorem~\ref{thm:PACEXT} is based on the \emph{lifting property} of PAC extensions \cite{Bary-Soroker2009PACEXT}. This property allows us to lift a solution of $\E(f,M)$ to a solution of $\E(f,K)$ that is \emph{geometric}, i.e., is induced by a specialization $\bfT\mapsto \bfa \in K^r$. See Section~\ref{sec:lifting}

Theorem~\ref{thm:PACEXT} is very applicable. To demonstrate this we include three applications. 

Schinzel's Hypothesis H predicts that a family of polynomials with integral coefficients admits infinitely many simultaneous prime values in $\mathbb{Z}$ under some necessary conditions. Analogs for polynomial rings was considered in \cite{ConradConradGross2008,BenderWittenberg2005,Pollack2008,Bary-SorokerIrrVal}. 

In  \cite{BenderWittenberg2005} Bender and Wittenberg obtain a geometric sufficient condition for a family of irreducible polynomials with two variables $T,X$ with coefficients in a large finite field to admit simultaneous irreducible values in $\mathbb{F}_q[T]$. The following  result extends \cite{BenderWittenberg2005} to the family of fields having PAC extensions. 

\begin{theorem}\label{thm:geo}
Let $K$ be a field of characteristic $p\geq 0$, let $f_1(T,X), \ldots, f_s(T,X)\in K[T,X]$ be irreducible polynomials. Assume that the Zariski closure $C_i\subseteq \mathbb{P}^2$ of the affine curve $\{f_i = 0\} \subseteq \mathbb{A}^2$ is smooth for every $i=1, \ldots, s$ and that $p\nmid d_i(d_i-1)$, where $d_i$ is the total degree of $f_i$. Assume there exists a PAC extension $M/K$ having a separable extension of degree $d_i$, for every $i=1,\ldots, s$. 
Then there exist infinitely many $(a,b)\in K^2$ such that all $f_i(T, aT+b)$ are irreducible in $K[T]$. 
\end{theorem}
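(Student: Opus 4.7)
The plan is to apply Theorem~\ref{thm:PACEXT} to the polynomials
\[
P_i(A,B,T) := f_i(T, AT+B), \quad i=1,\ldots,s,
\]
in $K[A,B,T]$, with the pair $(A,B)$ playing the role of $\bfT$ and $T$ playing the role of the main variable $X$; a Zariski dense set of irreducible specializations $(A,B)\mapsto(a,b)\in K^2$ is then exactly the desired conclusion. The first task is to verify the input hypotheses. The $K$-algebra isomorphism $K[A,B,T]\xrightarrow{\sim} K[A,T,X]$ sending $B\mapsto X-AT$ carries $P_i$ to $f_i(T,X)$, so by Gauss's lemma irreducibility of $f_i$ in $K[T,X]$ transfers to irreducibility of $P_i$ in $K[A,B,T]$. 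Separability of $P_i$ in $T$ follows from $p\nmid d_i$ together with smoothness of $C_i$, which prevent $\partial P_i/\partial T$ from vanishing identically.

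The second step is to compute the geometric Galois group $\gal(P_i,\tilde K(A,B))$. Under the change of variables above, the zero locus of $P_i$ over $\tilde K$ realizes the incidence correspondence of pairs (point on $C_i$, non-vertical line $X=AT+B$ through it), and projection to the $(A,B)$-plane is the family of non-vertical line sections of $C_i$. Since non-vertical lines form a Zariski open subset of $\check\bbP^2$, the monodromy of this family coincides with that of the full linear system of lines, which by the classical uniform position principle for smooth plane curves is the full symmetric group $S_{d_i}$ precisely when $p\nmid d_i(d_i-1)$. As the $f_i$ are distinct irreducible, the curves $C_i$ are distinct smooth curves, and a standard independence-of-branch-loci argument yields linear disjointness of the individual splitting fields over $\tilde K(A,B)$; writing $P=P_1\cdots P_s$,
\[
\gal(P,\tilde K(A,B)) = \prod_{i=1}^s S_{d_i}.
\]

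The heart of the argument is to construct a solution $\eta$ of the embedding problem $\mathcal{E}(P,M)$ whose image acts transitively on the root set $R_i$ of each $P_i$; by the transitivity remark following Theorem~\ref{thm:PACEXT} (applied with $C$ equal to the full arithmetic group $\gal(P,M(A,B))$) this will suffice. For each $i$, the given separable extension $M_i/M$ of degree $d_i$ yields a continuous transitive homomorphism $\gal(M)\to S_{d_i}$ via its action on the $d_i$ embeddings $M_i\hookrightarrow\tilde M$. The plan is to first take any set-theoretic lift of $\rho\colon\gal(M)\to G$ into $H=\gal(P,M(A,B))$, then modify it by a continuous twist valued in $\ker\alpha=\prod_i S_{d_i}$ built from the component homomorphisms $\gal(M)\to S_{d_i}$, and verify that the twist can be chosen so that the result is a genuine homomorphism whose image projects onto a transitive subgroup of each $S_{d_i}$. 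Once $\eta$ is produced, Theorem~\ref{thm:PACEXT} yields the claimed Zariski dense, and in particular infinite, set of $(a,b)\in K^2$.

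The main obstacle is exactly this construction of $\eta$: the existence of the twist is a $1$-cocycle problem for the action of the arithmetic quotient $G=\gal(L/M)$ on the geometric kernel $\prod_i S_{d_i}$, and solving it simultaneously in all $s$ coordinates requires the linear disjointness statement in its arithmetic form over $M(A,B)$, rather than merely over $\tilde M(A,B)$, together with careful tracking of how $\rho$ interacts with the natural projections $H\to\gal(P_i,M(A,B))$. All the remaining ingredients---namely the reduction to Theorem~\ref{thm:PACEXT}, the irreducibility and separability checks, and the Bertini-type monodromy computation---should then be routine.
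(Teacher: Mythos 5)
Your reduction to Theorem~\ref{thm:PACEXT} (with $(A,B)$ as the parameters and $T$ as the distinguished variable) and the irreducibility/separability checks are fine, but the proof is incomplete exactly at the step you yourself call the main obstacle: you never construct the weak solution $\eta$ of $\E(P,M)$, only sketch a twisting/$1$-cocycle strategy and leave its solvability open. As posed, that cocycle problem is not something you can expect to solve in general, and no such machinery is needed here. The point you are missing is that once the geometric group is the full product, the arithmetic group is forced to equal it: $\gal(P,M(A,B))$ acts faithfully on the disjoint union of the root sets $R_i$, hence embeds in $\prod_{i=1}^s S_{d_i}$, and it contains the geometric group $\gal(P,\tilde K(A,B))=\prod_{i=1}^s S_{d_i}$; so the two coincide, the splitting field of $P$ is regular over $M$, and the quotient $G$ in $\E(P,M)$ is trivial. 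Consequently \emph{any} homomorphism $\gal(M)\to\prod_i S_{d_i}$ is a weak solution, and one simply takes $\eta=(\eta_1,\dots,\eta_s)$ with $\eta_i$ the transitive action of $\gal(M)$ on the $d_i$ embeddings $\Hom_M(M_i,M_s)$ of the given degree-$d_i$ separable extension; the remark following Theorem~\ref{thm:PACEXT} (transitive image, $C$ the full group) then applies. Without either this observation or a completed twisting argument, the central step of your proof is a genuine gap.

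You should also be more careful with the monodromy input. The paper does not work with the full two-parameter family of lines: it quotes the construction of Bender--Wittenberg \cite{BenderWittenberg2005}, who produce a single projection point such that already the \emph{pencil} of lines through it has geometric Galois group $S_{d_i}$ for each curve and, crucially, $\prod_{i=1}^s S_{d_i}$ for the compositum; this gives a one-variable polynomial $\tilde f_i(T,X)$, the identity $\gal(f,K(X))\cong\gal(f,K_s(X))\cong\prod_i S_{d_i}$ (which is exactly the arithmetic-equals-geometric point above), and infinitely many $(a,b)$ along that pencil after specializing $X\mapsto b_0$. Your appeal to the ``classical uniform position principle'' and a ``standard independence-of-branch-loci argument'' is precisely the nontrivial content in characteristic $p$: uniform position can fail in positive characteristic, and the hypotheses ($C_i$ smooth, $p\nmid d_i(d_i-1)$) are exactly what Bender--Wittenberg use to prove both the $S_{d_i}$ statement and the linear disjointness across the $s$ curves. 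So either cite their result (as the paper does) or supply the characteristic-$p$ argument; as written, this second step is asserted rather than proved, though it is a citable known result, unlike the missing construction of $\eta$.
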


Here apart of Theorem~\ref{thm:PACEXT} we use a calculation of a Galois group due to Bender and Wittenberg.

It is interesting to note that Theorem~\ref{thm:geo} implies  \cite{BenderWittenberg2005}. This is done by applying Theorem~\ref{thm:geo} in the case $K$ is pseudo finite, $K=M$, and then applying Ax's theorem on the elementary theory of finite fields \cite{Ax1968}, see Section~\ref{sec:geom_ff}. 

The second application generalizes a result of Pollack \cite{Pollack2008} and the author \cite{Bary-SorokerIrrVal}. 
\begin{theorem}\label{thm:arth}
Let $K$ be a field of characteristic $p\geq 0$, let $n>0$ be an integer such that $n$ is odd if $p=2$, and let $f_1(X), \ldots, f_s(X)\in K[X]$ be non-associate irreducible separable polynomials with respective roots $\omega_1, \ldots, \omega_s$. Assume there exists a PAC extension $M/K$ such that $M(\omega_i)$ has a degree $n$ separable extension, for every $i=1,\ldots, s$. Then there exists a Zariski dense set of $(a_1,\ldots, a_n)\in K^n$ such that for $g(T) = T^n + a_1 T^{n-1} + \cdots + a_n$  all $f_i(g(T))$ are irreducible. 
\end{theorem}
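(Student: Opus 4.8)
The plan is to derive Theorem~\ref{thm:arth} from Theorem~\ref{thm:PACEXT} by choosing the right multivariable family $f(\mathbf T, X)$ and computing its associated geometric embedding problem $\E(f,M)$. Set $r=n$, write $\mathbf T=(T_1,\dots,T_n)$, and consider the single auxiliary polynomial $g(\mathbf T,X)=X^n+T_1X^{n-1}+\cdots+T_n\in K[\mathbf T,X]$, the generic polynomial of degree $n$. A standard fact (the symmetric-function/Lagrange-resolvent computation) gives $\gal(g(\mathbf T,X),\tilde K(\mathbf T))\cong S_n$ when $n$ is not divisible by $p$, and still $S_n$ in the remaining characteristic-$2$ case allowed here since $n$ is assumed odd; in all cases $g$ is separable in $X$. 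Now for each $i$ let $\omega_i$ be the chosen root of $f_i$, and form the composite family we actually feed into Theorem~\ref{thm:PACEXT}: the polynomials $f_i(g(\mathbf T,X))\in K[\mathbf T,X]$, whose product is $f(\mathbf T,X)=\prod_i f_i(g(\mathbf T,X))$. First I would check that each $f_i(g(\mathbf T,X))$ is irreducible over $K(\mathbf T)$ and separable in $X$: irreducibility follows because a root is $y$ with $g(\mathbf T,y)=\omega_i$, so $K(\mathbf T)(y)\supseteq K(\mathbf T,\omega_i)=K(\omega_i)(\mathbf T)$, and the tower $K(\omega_i)(\mathbf T)\subseteq K(\mathbf T)(y)$ has degree $n$ by the genericity of $g$ over the base field $K(\omega_i)$, while $[K(\omega_i)(\mathbf T):K(\mathbf T)]=\deg f_i$; multiplicativity of degrees then forces $[K(\mathbf T)(y):K(\mathbf T)]=n\deg f_i=\deg_X f_i(g(\mathbf T,X))$.

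Next I would identify $\E(f,M)$ explicitly. Over $\tilde M(\mathbf T)$ the polynomial $f_i(g(\mathbf T,X))$ factors into $\deg f_i$ translates of the generic polynomial $g(\mathbf T,X)-\omega_i^{(j)}$ (one for each conjugate $\omega_i^{(j)}$ of $\omega_i$), each of which has geometric Galois group $S_n$; so the splitting field $F$ of $f(\mathbf T,X)$ over $M(\mathbf T)$ sits in a diagram where $\gal(F/\tilde M(\mathbf T))\cong S_n^{\,N}$ with $N=\sum_i\deg f_i$, permuted by $\gal(M)$ according to how it permutes the roots $\omega_i^{(j)}$ — that is, $L=F\cap\tilde M$ is the compositum of the splitting fields of the $f_i$ over $M$, $G=\gal(L/M)$, and $H=\gal(F/M(\mathbf T))$ is an extension of $G$ by $S_n^{\,N}$, a wreath-type product. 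The point is that a solution $\eta\colon\gal(M)\to H$ of $\E(f,M)$ with the transitivity property required by Theorem~\ref{thm:PACEXT} amounts to: a lift of the identity on $G$ together with, for each orbit, an element of $S_n$ realizing a transitive action, i.e. a degree-$n$ separable extension of the fixed field of the corresponding point stabilizer, which is exactly $M(\omega_i)$. This is where the hypothesis "$M(\omega_i)$ has a degree-$n$ separable extension for each $i$" gets used: it produces, for each $i$, a continuous homomorphism $\gal(M(\omega_i))\to S_n$ with transitive image, and inducing these up along $\gal(M(\omega_i))\le\gal(M)$ assembles the required solution $\eta$ whose image meets each block transitively. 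Taking $C=\gal(f(\mathbf T,X),M(\mathbf T))$, the transitivity of $H_0=\eta(\gal(M))$ on the roots $R_i$ of $f_i(g(\mathbf T,X))$ gives $(H_0\cap C)x_i=H_0x_i=R_i=Cx_i$ as in the second bullet of the Remark after Theorem~\ref{thm:PACEXT}.

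With the hypotheses of Theorem~\ref{thm:PACEXT} verified for the family $\{f_i(g(\mathbf T,X))\}_i$ over the PAC extension $M/K$, that theorem yields a Zariski-dense set of $\mathbf a=(a_1,\dots,a_n)\in K^n$ such that every $f_i(g(\mathbf a,X))$ is irreducible in $K[X]$; since $g(\mathbf a,X)=X^n+a_1X^{n-1}+\cdots+a_n$ this is precisely the conclusion, with $T$ playing the role of $X$. I expect the main obstacle to be the second paragraph: correctly describing $H=\gal(F/M(\mathbf T))$ as a (possibly non-split) extension of $G$ by $S_n^N$ and checking that the block-wise transitivity one gets from the degree-$n$ extensions of the $M(\omega_i)$ really does descend to a single solution $\eta$ of the full embedding problem $\E(f,M)$ — in particular that the independence of the $S_n$-factors (they sit over the distinct, Galois-permuted points $\omega_i^{(j)}$) lets one choose the solution orbit by orbit without obstruction. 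Managing the characteristic-$p$ hypotheses ($n$ odd when $p=2$, $p\nmid n$ otherwise) so that $g$ genuinely has geometric group $S_n$ and stays separable is a routine but necessary check that should be dispatched early.
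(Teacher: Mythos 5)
Your proposal follows essentially the same route as the paper's proof: pass to the generic monic polynomial $\mathcal{G}(\bfA,T)=T^n+A_1T^{n-1}+\cdots+A_n$, identify $\gal(f\circ\mathcal{G},K(\bfA))$ as the permutational wreath product $S_n\wr_{\Omega}\gal(L/K)$, build a solution $\eta$ of $\E(f\circ\mathcal{G},M)$ by inducing transitive homomorphisms $\gal(M(\omega_i))\to S_n$ obtained from the degree-$n$ separable extensions, and apply Theorem~\ref{thm:PACEXT} with $C$ a block stabilizer. The two points you flag as the main obstacle --- that the geometric part is the \emph{full} product $S_n^{\Omega}$ (this independence of factors is exactly where the hypothesis that $n$ be odd when $p=2$ enters; the theorem imposes no condition such as $p\nmid n$ for odd $p$) and that the orbit-by-orbit data assembles into a single solution $\eta$ --- are precisely the cited Proposition~3.6 and Lemma~3.7 of \cite{Bary-SorokerIrrVal} in the paper's proof, so your sketch is the intended argument with those two ingredients taken on faith rather than proved.
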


Here we need a calculation of Galois groups  that appears in \cite{Bary-SorokerIrrVal} in order to apply Theorem~\ref{thm:PACEXT}. 

\begin{remark}
\label{rem:PACEXT}
In \cite{Bary-SorokerIrrVal} the author proves Theorem~\ref{thm:arth} for PAC fields (i.e., $K=M$). In the case when $K$ is also pseudo finite, i.e.\ PAC, $\gal(K)=\widehat{\mathbb{Z}}$, and $K$ is perfect, a more precise result is obtained. Then using Ax' theorem it follows that 
if $K = \mathbb{F}_q$ is a finite field of characteristic $p$, and if $n$ is an integer, odd if $p=2$, then 
\[
\#\{(a_1, \ldots, a_n)\in \mathbb{F}_q^n \mid \mbox{all } f_i(t^n +a_1t^{n-1} + \cdots +a_n) \mbox{ are irreducible} \} = q^n+O(q^{n-\frac12}).
\]
Here the asserted constant depends on the sum of the degrees of $f_1, \ldots, f_s$ and on $n$. 

This extends the previous result \cite{Pollack2008}, in which Pollack establishes the asymptotic formula under the assumptions  $p\neq 2$ and $p\nmid n$. 
\end{remark}

These two applications make the property that a field $K$ has a PAC extension $M/K$ with `many' separable extensions interesting. As mentioned above, in \cite{Bary-SorokerKelmer}, this property was studied, and some examples where given. E.g., pro-solvable extensions of a countable Hilbertian field  ($n\geq 5$) and more. We hope that this work will motivate further study of PAC extensions. 

The last result applies the theory of PAC extensions to the theory of Hilbertian fields. It is known that if $K$ is a countable Hilbertian field, then for every $n\geq 1$ there is an abundance of PAC extensions $M/K$ such that $\gal(M)$ is a free profinite group of rank $n$ \cite{JardenRazon1994}. We prove a strong converse. 
\begin{theorem}
\label{thm:Hilb}
Let $K$ be a field. Assume that for infinitely many $n\geq 1$ there exists a PAC extension $M/K$ with $\gal(M)$ free of rank $\geq n$. Then $K$ is Hilbertian.  
\end{theorem}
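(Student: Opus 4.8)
The plan is to verify Hilbertianity directly from the definition: given finitely many polynomials $g_1(\bfT,X),\ldots,g_s(\bfT,X)\in K[\bfT,X]$ irreducible and separable in $X$, I must produce a Zariski-dense set of $\bfa\in K^r$ with all $g_i(\bfa,X)$ irreducible in $K[X]$. The natural strategy is to apply Theorem~\ref{thm:PACEXT} to $g=\prod g_i$ (after discarding repetitions and factoring out a possible common factor so that the $g_i$ become distinct and irreducible). For this I need a PAC extension $M/K$ together with a solution $\eta$ of the associated geometric embedding problem $\E(g,M)$ whose image meets a suitable $C\geq\ker\alpha$ in a way that is transitive on each root set $R_i$; by the first remark after Theorem~\ref{thm:PACEXT} it suffices that $\eta$ be surjective onto $\gal(g,M(\bfT))$, or merely that its image act transitively on each $R_i$ with $C=\gal(g,M(\bfT))$.

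The key observation is that the hypothesis ``$\gal(M)$ free of rank $\geq n$'' makes solvability of embedding problems cheap. First I would fix $n$ large compared to the geometric Galois group $\hat H=\gal(g,\tilde K(\bfT))$ — concretely, pick $n$ at least the minimal number of generators of any finite quotient of $\gal(g, M(\bfT))$ that can occur, but since the latter depends on $M$ I instead argue as follows. The geometric embedding problem $\E(g,M)=(\rho_M\colon\gal(M)\to G_M,\ \alpha_M\colon H_M\to G_M)$ has finite kernel and the group $H_M=\gal(g,M(\bfT))$ is a subgroup of the geometric group $\hat H=\gal(g,\tilde K(\bfT))$ (base change to the algebraic closure only enlarges the group), so $H_M$ is generated by at most $d=d(\hat H)$ elements, a bound independent of $M$. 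Now choose $n\geq d$ from the infinitely many available values, and take the corresponding PAC extension $M/K$ with $\gal(M)$ free of rank $\geq n\geq d$. Since $\gal(M)$ is free profinite of rank $\geq d$ and $H_M$ has a generating set of size $\leq d$, the finite embedding problem $\E(g,M)$ is solvable with a \emph{surjective} solution $\eta\colon\gal(M)\twoheadrightarrow H_M$ (a free profinite group of rank $\geq d$ maps onto any finite group generated by $d$ elements, and the solution can be chosen compatible with $\rho_M$ because $\rho_M$ is itself a quotient map that, being a homomorphism from a projective group, already factors appropriately — here one uses that free profinite groups are projective, so $\rho_M$ lifts through $\alpha_M$). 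With $\eta$ surjective, the hypothesis of Theorem~\ref{thm:PACEXT} holds with $C=\gal(g,M(\bfT))$, and Theorem~\ref{thm:PACEXT} delivers the required Zariski-dense set of irreducible specializations of $g$, hence of each $g_i$.

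The main obstacle I anticipate is the bookkeeping needed to guarantee that $\eta$ can be chosen \emph{simultaneously} surjective and compatible with the projection $\rho_M\colon\gal(M)\to G_M$ built into $\E(g,M)$, rather than merely one or the other. The clean way around this is projectivity: $\gal(M)$ is free, hence projective, so any embedding problem for $\gal(M)$ with finite kernel is solvable; and for a \emph{free} group of rank $\geq d(H_M)$ one can upgrade an arbitrary solution to a surjective one by a standard argument (compose with an automorphism of the free group moving a chosen basis to a generating system that maps onto $H_M$ and is compatible with the already-fixed values on a finite quotient). A secondary point requiring care is the reduction step at the start: the $g_i$ supplied by the Hilbertianity test need not be distinct nor pairwise non-associate, and $\prod g_i$ need not be separable without the separability-in-$X$ hypothesis — but separability in $X$ is assumed, and replacing the family by the set of distinct irreducible factors loses nothing, since irreducibility of each distinct factor upon specialization implies it for the original list. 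Finally one notes that the rank of $\gal(M)$ being $\geq n$ (rather than $=n$) is exactly what lets the single chosen $M$ handle the fixed finite problem, so no diagonal argument over all $n$ is needed; one value of $n$ in the infinite set, chosen after seeing $g$, suffices.
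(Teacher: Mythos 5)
Your proposal follows essentially the same route as the paper: pick, after seeing the polynomials, one $n$ from the infinite set that is large enough, take the corresponding PAC extension $M/K$ with $\gal(M)$ free of rank $\geq n$, properly (i.e.\ surjectively) solve the associated embedding problem $\E(f,M)$ using freeness, and feed the surjective solution into Theorem~\ref{thm:PACEXT} via the surjectivity remark. The paper does exactly this (after the standard reduction to a single irreducible $f(T,X)$), quoting \cite[Proposition 17.7.3 and Theorem 24.8.1]{FriedJarden2008} for the proper solvability instead of your projectivity-plus-Gasch\"utz sketch, and it bounds the required rank by $|\gal(f(T,X),K(T))|$.

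There is, however, one concretely wrong step in your write-up: the claim that $H_M=\gal(g,M(\bfT))$ is a subgroup of $\hat H=\gal(g,\tilde K(\bfT))$ because ``base change to the algebraic closure only enlarges the group''. The inclusion goes the other way: enlarging the base field shrinks the Galois group, so $\gal(g,\tilde K(\bfT))\leq \gal(g,M(\bfT))\leq \gal(g,K(\bfT))$ (via restriction, since $K\subseteq M\subseteq \tilde K$). Consequently $d(\hat H)$ need not bound $d(H_M)$: for $g=X^2-c$ with $c$ a nonsquare, $\hat H$ is trivial while $H_M$ can have order $2$, so your choice of $n\geq d(\hat H)$ is not justified as stated. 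The repair is exactly the paper's bound: $\gal(g,M(\bfT))$ embeds into the finite group $\gal(g,K(\bfT))$, which does not depend on $M$, so choosing $n\geq |\gal(g,K(\bfT))|$ (or any bound on the rank of its subgroups) makes the rest of your argument go through unchanged.
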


\begin{remark}
Razon proves that if $K$ has a PAC extension $M$ with $\gal(M)$ free of infinite rank, then $M$ is Hilbertian over $K$, and in particular $K$ is Hilbertian \cite[Corollary~2.6]{Razon1997}.
\end{remark}

\section{Background} 
We briefly recall the definition of geometric embedding problems and of double embedding problems and we formulate the lifting property of PAC extensions. This property plays a crucial role in the proof of Theorem~\ref{thm:PACEXT}. Full details appear in \cite{Bary-Soroker2009PACEXT}, cf. \cite{Bary-SorokerIrrVal}. 

\subsection{Geometric embedding problems}
Let $K$ be a field, $K_s$ a separable closure of $K$, and $\gal(K)=\gal(K_s/K)$ the  absolute Galois group of $K$. 
A finite embedding problem $\E$ for $K$ consists on an epimorphism of finite groups $\alpha\colon H\to G$ and a epimorphism\footnote{all homomorphism are assumed to be continuous} $\rho\colon \gal(K)\to G$. In short we write $\E = (\rho,\alpha)$. A weak solution is a homomorphism $\theta\colon \gal(K)\to H$ such that $\alpha\theta=\rho$. If $\theta$ is surjective, we say that $\theta$ is a proper solution.

\[
\xymatrix{
		&\gal(K)\ar@{->>}[d]^\rho\ar[dl]_{\theta}\\
G\ar@{->>}[r]^\alpha
		&G
}
\]

Assume that $E$ is a finitely generated regular extension of $K$, and let $F/E$ be a finite Galois extension. Then $L = F\cap K_s$ is Galois over $K$ and 
\[
\E(F/E,K) = (\rho\colon \gal(K)\to \gal(L/K), \alpha\colon \gal(F/E)\to \gal(L/K)),
\]
with $\rho,\alpha$ the restriction maps, is a finite embedding problem for $K$ (note that $E/K$ regular implies that $\alpha$ is surjective). These embedding problems are called \textbf{geometric}. If $E = K(\bfT)$, for some tuple $\bfT=(T_1,\ldots, T_r)$ of algebraically independent variables, we call the embedding problem $\E(F/K(\bfT),K)$ \textbf{rational}. 

Let $\phi$ be  a $K$-place of $E$ (i.e.\ $\phi(x)=x$, for all $x\in K$). Assume that the residue field of $\phi$ is $K$ and that $\phi$ is unramified in $F$. Then every $L$-place $\Phi$ of $F$ that prolongs $\phi$ defines a solution $\Phi^*$ of $\E(F/E,K)$ by the formula
\begin{equation}\label{eq:geometric_sol}
\Phi(\Phi^*(\sigma)x) = \sigma \Phi(x),
\end{equation}
for every $\sigma\in \gal(K)$ and for every  $x\in F$ with $\Phi(x)\neq \infty$. The collection $\phi^* = \{\Phi^*\mid \Phi \mbox{ prolongs }\phi\}$ is a $\ker\alpha$-inner-automorphism class. 

When $E = K(\bfT)$, $\bfT = (T_1, \ldots, T_r)$, $r\geq 1$, and $F$ is the splitting field of a polynomial $f(\bfT,X)$ that is separable in $X$ we write $\E(f,K) = \E(F/E,K)$ and say that $\E(f,K)$ is the \textbf{associated embedding problem}. We emphasize that in this case $\gal(F/E) = \gal(f,K(\bfT))$ comes together  with a natural permutation representation of degree $\deg_X f$. 

\begin{remark}
\label{rem:act}
Let $\bfa\in K^r$ be such that $f(\bfa,X)$ is separable and of the same degree as the $X$-degree of $f(\bfT,X)$. Then extend $\bfT\mapsto \bfa$ to a $K$-place of $K(\bfT)$ with residue field $K$ and let $\Phi$ be an $L$-place of $F$ prolonging $\phi$. Then $\Phi(x) \neq \infty$, for every root $x\in F$ of $f(\bfT,X)$.
By \eqref{eq:geometric_sol} the action of $\gal(K)$ on the roots of $f(\bfa,X)$ coincides with the action of $\Phi^*(\gal(K))$ on the roots of $f(\bfT,X)$.
\end{remark}

\subsection{Double embedding problems}
Let $M/K$ be a field extension. A finite double embedding problem consists of a commutative diagram
\begin{eqnarray} 
\label{eq:DEP}%
\xymatrix@C=40pt{%
		&&\gal(M)\ar[d]^{\r}\ar@{->>}[ddr]^{\mu}\ar@{.>}[ddll]_{\eta}\\
		&&\gal(K)\ar@{->>}[d]^{\nu}\ar@{.>}[dl]_{\theta}\\
B\ar@{^(->}[r]^j\ar@/_10pt/@{->>}[rrr]|{~\alpha~}
	&H\ar@{->>}[r]^{\beta}
		&G
			&A\ar@{_(->}[l]_{i}
}%
\end{eqnarray}
where $G,H,A,B$ are finite groups, $B\leq H$, $A\leq G$, $i,j$ are the
inclusion maps, $\r$ is the restriction map, and $\alpha,\mu,\beta,\nu$ are surjective. Therefore
a finite double embedding problem consists of two compatible finite embedding problem: $(\nu,\beta)$ for $K$ and $(\mu, \alpha)$ for $M$. In short we denote the double embedding problem by $((\mu, \alpha),(\nu,\beta))$.

A solution is a pair $(\eta,\theta)$ consisting of a weak solution $\eta$ of $(\mu, \alpha)$ and a weak solution $\theta$ of $(\nu,\beta)$ that commute \eqref{eq:DEP}. 
We note that $\eta = \theta \r$, and that $(\theta \r,\theta)$ is a solution if and only if $\theta(\r(\gal(M)))\leq B$.

A finite double embedding problem is called rational if $(\nu,\beta)$ is rational. In that case, $H = \gal(F/K(\bfT))$ for some Galois extension $F/K(\bfT)$, $G= \gal(L/K)$, where $L = F\cap K_s$, and $\alpha,\nu$ are the restriction maps.  

Then $A=\gal(L/L\cap M) \cong \gal(N/M)$, where $N=LM$, and $B$ is a subgroup of $\beta^{-1}(A) = \gal(FN/M(\bfT))$. So $B\cong \gal(FN/E)$, for some $M(\bfT)\subseteq E\subseteq FN$. Under this identifications, $\alpha$ becomes the restriction map. Note that since $\alpha$ is surjective, $E\cap M_s=M$, and thus $E$ is regular over $M$.   So $(\mu,\alpha)$ is a geometric embedding problem. 

A geometric solution of a rational double embedding problem consists of a pair $(\Psi^*,\Phi^*)$, where $\Psi$ is an $N$-place of $FN$ unramified over $M(\bfT)$ such that the residue field of $K(\bfT) $ is $K$ and $\Phi = \Psi|_{F}$. In particular, $\Phi^*$ is  a geometric solution of $(\nu,\alpha)$. 

We note that if $f(\bfT,X)\in K[\bfT,X]$ is a separable polynomial, then $\E(f,M/K) = (\E(f,M),\E(f,K))$ is a finite rational double embedding problem for $M/K$.  

\subsection{The lifting property}
\label{sec:lifting}
We formulate the lifting property of PAC extensions \cite[Proposition 4.6]{Bary-Soroker2009PACEXT}. 

\begin{proposition}
Let $M/K$ be a PAC extension, let 
\[
(\E_M,\E_K) = ((\mu\colon \gal(M)\to A, \alpha\colon B\to A),(\nu\colon \gal(K) \to G, \beta\colon H\to G ))
\]
be a rational finite double embedding problem for $M/K$ and let $\eta$ be a weak solution of $\E_M$. Then there exists a geometric solution $(\Psi^*,\Phi^*)$ of $(\E_M, \E_K)$ such that $\Psi^*=\theta$. 

Moreover, if $H = \gal(F/K(\bfT))$, $\bfT = (T_1,\ldots, T_r)$, and if $q(\bfT)\in K[\bfT]$ is nonzero, then we can choose $\Psi$ so that $\bfa=\Psi(\bfT)\in K^r$, and $q(\bfa)\neq 0$. 
\end{proposition}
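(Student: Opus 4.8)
The plan is to encode the weak solution $\eta$ as an absolutely irreducible $M$-variety carrying a dominant separable map to $\bbA^r$, to feed that variety into the defining property of a PAC extension, and to read the place $\Psi$ off the resulting $M$-rational point.

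First I would geometrize $\E_M$. As recalled above, identify $B$ with $\gal(FN/E)$ for $M(\bfT)\subseteq E\subseteq FN$ with $E$ regular over $M$ and $\alpha\colon B\to A=\gal(N/M)$ the restriction, and put $\bar B:=\eta(\gal(M))$. Since $\eta$ is a \emph{solution}, $\alpha(\bar B)=\mu(\gal(M))=A$, so the fixed field $E_\eta:=(FN)^{\bar B}$ meets $M_s$ in $N^A=M$ and is thus regular over $M$ of transcendence degree $r$. I would take an absolutely irreducible affine $M$-model $X_\eta$ — the twist, by the descent datum that $\eta$ determines, of a model of $E_\eta$ together with its $\bar B$-Galois cover attached to $FN$; the inclusion of $M(\bfT)$ into the function field of $X_\eta$ gives a dominant morphism $\pi\colon X_\eta\to\bbA^r$, separable because every extension in sight is (in positive characteristic this uses separability of $f$ in $X$). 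Deleting from $X_\eta$ the non-smooth locus, the ramification locus of the cover attached to $FN$, and $\pi^{-1}(\{q=0\})$ — none of which changes the dimension or absolute irreducibility — I may assume $X_\eta$ smooth, $\pi$ étale, the cover $FN/E_\eta$ unramified over $X_\eta$, and $q$ invertible on $X_\eta$.

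Granting the construction, the rest is short. Applying the definition of a PAC extension to $\pi\colon X_\eta\to\bbA^r$, an absolutely irreducible $M$-variety of dimension $r\ge 1$ with $\pi$ dominant and separable, yields $\mathbf v\in X_\eta(M)$ with $\bfa:=\pi(\mathbf v)\in K^r$. The point $\mathbf v$ gives an $M$-place $\phi$ of the function field of $X_\eta$ with residue field $M$; extending it to an $N$-place $\Psi$ of $FN$ (unramified over $M(\bfT)$ by the shrinking above), we get that $\Psi|_{K(\bfT)}$ has residue field $K$ because $\bfa\in K^r$, and, because $X_\eta$ was the $\eta$-twist, the induced solution $\Psi^\ast$ of $\E_M$ is exactly $\eta$. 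Setting $\Phi=\Psi|_F$, the pair $(\Psi^\ast,\Phi^\ast)$ is a geometric solution of $(\E_M,\E_K)$: that $\Phi^\ast\colon\gal(K)\to H$ solves $\E_K$ and is compatible with $\Psi^\ast$ is automatic, since both are computed from the single place $\Psi$ via \eqref{eq:geometric_sol}. Finally $\bfa=\Psi(\bfT)\in K^r$ with $q(\bfa)\ne 0$ because $q$ is invertible on $X_\eta$, which is the ``moreover''.

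The step I expect to be the main obstacle is the one I glossed over: the twisted model $X_\eta$ must be constructed so that it is absolutely irreducible over $M$ \emph{and} so that every $M$-point of it lying over $K^r$ realizes the \emph{specific} solution $\eta$, rather than merely a $\ker\alpha$-inner conjugate of $\eta$ or a lift of $\mu$ with smaller image; choosing the descent datum correctly and verifying these two properties is where the work lies. It is precisely here that one uses that $\eta$ is a solution of $\E_M$ and not an arbitrary homomorphism into $B$: the relation $\alpha\eta=\mu$ both makes $E_\eta$ regular over $M$ and makes the twisting datum trivial after base change to $M_s$, so that the geometric cover stays connected and the twisted form indeed descends to $M$. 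Everything else — existence of the model, the dictionary between points and places, reading off $\Phi^\ast$ and the compatibility, and keeping separability and unramifiedness in hand in positive characteristic — is routine field-arithmetic bookkeeping.
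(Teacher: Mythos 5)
First, note that the paper does not prove this proposition at all: it is quoted from \cite[Proposition 4.6]{Bary-Soroker2009PACEXT}, so the only fair comparison is with the method of that reference, which is indeed of the kind you describe (apply the rational-point property of the PAC extension $M/K$ to a suitable auxiliary variety built from the weak solution, then read the solution off the resulting place). Your high-level strategy is therefore the right one, and the peripheral steps you call bookkeeping (shrinking to avoid ramification, the zero set of $q$, residue field of $K(\bfT)$ equal to $K$ because $\bfa\in K^r$, compatibility of $\Psi^*$ and $\Phi^*$ since both come from the single place $\Psi$ via \eqref{eq:geometric_sol}) are fine.

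The genuine gap is exactly the step you flag and then skip, and as sketched it does not work. Taking $E_\eta=(FN)^{\bar B}$ with $\bar B=\eta(\gal(M))$ and an $M$-rational, unramified point of a model of $E_\eta$ only yields a weak solution $\Psi^*$ whose image lies in the decomposition group, hence in (a conjugate of) $\bar B$, with $\alpha\Psi^*=\mu$; nothing in this construction pins down $\Psi^*$ to be $\eta$ itself --- already when $A=1$ any homomorphism into $\bar B$ with possibly smaller image or different kernel could arise. Forcing $\Psi^*=\eta$ is precisely the content of the proposition beyond the bare definition of a PAC extension, and it requires the field-crossing/twisting done with an auxiliary \emph{finite constant extension}: one must choose a finite Galois extension $M'/M$ through which $\eta$ factors, pass to $FN\cdot M'$, and take the fixed field of the twisted diagonal subgroup $\{(\eta(\sigma),\sigma|_{NM'}):\sigma\in\gal(M)\}$ of the fibre product $\gal(FNM'/E)$ (equivalently, D\`ebes-style twisting of the $B$-cover by the cocycle determined by $\eta$), check that this field is regular over $M$ because the twisted diagonal surjects onto $\gal(NM'/M)$, and verify that an $M$-point of a model mapping into $K^r$ produces a place some prolongation of which induces exactly $\eta$ (here one also uses that the prolongations of a given place form a full $\ker\alpha$-conjugacy class). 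Your text gestures at ``the twist, by the descent datum that $\eta$ determines'' but never introduces the finite extension $M'$ or the twisted diagonal, and explicitly defers the verification that $M$-points realize the specific $\eta$; since that verification is the heart of the lifting property, the proposal as written does not constitute a proof but rather a reduction to the statement being proved.
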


\section{Proof of Theorem~\ref{thm:PACEXT}}
Let $K$ be a field, $\bfT = (T_1, \ldots, T_r)$,  $f_1(\bfT,X), \ldots, f_s(\bfT,X)\in K[\bfT,X]$ distinct irreducible polynomials that are separable in $X$, $f=f_1\cdots f_s$, and for each $i$ let $x_i$ be a root of $f_i(\bfT,X)$ in a fixed algebraic closure of $K(\bfT)$. 
Let $F$ be the splitting field of $f(\bfT,X)$ over $K(\bfT)$, then $\hat{F}=FM$ is the splitting field of $f(\bfT,X)$ over $M(\bfT)$. Let $L=F\cap K_s$ and $N = LM = FM\cap K_s$. Then the associated double embedding problem $\E(f,M/K)=(\E(f,M),\E(f,K))$ is 
\[
\xymatrix{
														&&\gal(M)\ar[d]_{\phi}\ar[ddr]^{\mu}\\
														&&\gal(K)\ar[d]^{\nu}\\
\gal(\hat{F}/M(\bfT))\ar[r]^j\ar@/_10pt/[rrr]|{~\alpha~}		
							&\gal(F/K(\bfT))\ar[r]^{\beta}			
														&\gal(L/K)
																			&\gal(N/M).\ar[l]_{i}
}
\]
Here all maps are restriction maps. Note that $\ker (\alpha) \cong \ker(\beta) \cong \gal(F K_s/K_s(\bfT))$.  

Let $\eta\colon \gal(M) \to \gal(\hat{F}/M(\bfT))$ be a weak solution of $\E(f,M)$. Let $H_0 = \eta(\gal(M))$ be the image of $\eta$. 
By the lifting property we have a Zariski dense set of $\bfa\in K^r$ and a geometric solution $(\Psi^*, \Phi^*)$ of $\E(f,M/K)$ such that 
$\Phi(\bfT)= \Psi(\bfT) = \bfa$ and $\Psi^* = \eta$. Let $H_1 = \theta(\gal(K))$; then $H_0\leq H_1 \leq H := \gal(F/K(\bfT))$. Without loss of generality we can assume that $f(\bfa,X)$ is separable and of the same degree as $\deg_X f(\bfT,X)$, and hence the same holds true for all $f_i$. 

Now all $f_i(\bfa,X)$ are irreducible if and only if $\gal(K)$ acts transitively on the set of roots of $f_i(\bfa,X)$ for all $i$. By Remark~\ref{rem:act} the action of $\gal(K)$ on the roots of $f(\bfa,X)$ coincides with the action of $H_1$ on the roots of $f(\bfT,X)$. So it suffices to prove that $H_1$ acts transitively on the set of roots of $f_i(\bfT,X)$ which is $Hx_i$, for every $i$.

Let $\ker\alpha \leq C \leq \gal(f(\bfT,X),M(\bfT))$ and assume $(H_0\cap C) x_i=  Cx_i$ for every $i=1,\ldots, s$. 
Let $hx_i\in H x_i$. Since $\beta(H_1) = \nu(\gal(K)) = \gal(L/K)$, we have $h_1\in H_1$ such that $h_1^{-1} h \in \ker\beta = \ker\alpha\leq C$. So there exists $c\in H_0\cap C$ such that $h_1^{-1} h x_i = c x_i$, hence $hx_i = (h_1 c)x_i$. This finishes the proof since $h_1 c \in H_1 (H_0\cap C)\leq H_1$. \qed

\section{Applications}
\subsection{Proof of Theorem~\ref{thm:geo}}
Let $K$ be a field of characteristic $p\geq 0$, let $f_1(T,X), \ldots, f_s(T,X)\in K[T,X]$ be irreducible polynomials. Assume that the Zariski closure $C_i\subseteq \mathbb{P}^2$ of the affine curve $\{f_i = 0\} \subseteq \mathbb{A}^2$ is smooth for every $i=1, \ldots, s$ and that $p\nmid d_i(d_i-1)$, where $d_i$ is the total degree of $f_i$. Assume there exists a PAC extension $M/K$ having a separable extension of degree $d_i$, for every $i=1,\ldots, s$. We have to show that there exist infinitely many $(a,b)\in K^2$ such that all $f_i(T, aT+b)$ are irreducible in $K[T]$. 

In \cite[Section~3]{BenderWittenberg2005} an open subset $U$ of $\mathbb{P}^2$ is constructed such that for every $M\in U$ the compositum $\phi_i \colon C_i \to \mathbb{P}^1$ of the inclusion map $C_i\to \mathbb{P}^2\smallsetminus\{M\}$ and of the projection from $M$ map $\mathbb{P}^2\smallsetminus \{M\} \to \mathbb{P}^1$ is a degree $d_i$ map having the following property. If $F_i/K(X)$ is the Galois closure of the function field extension corresponding to $\phi_i$, then $\gal(F_iK_s/K_s(X)) \cong S_{d_i}$. Moreover, if $F = F_1\cdots F_s$, then $\gal(FK_s/K_s(X)) \cong \prod_{i=1}^s S_{d_i}$. (Note that we switched the roles of $X,T$ here, in order to be consistent with the notation of \cite{BenderWittenberg2005}.)

Choosing affine coordinates, we get that there exist nonzero $a_1, a_2, a_3, a_4, a_5\in K$ such that $F_i/K(X)$ is the splitting field of the polynomial $\tilde f_i (T,X) = f_i(T, \frac{a_1 T + a_2 X + a_3}{a_4X + a_5})$, for every $i=1,\ldots, s$. Let $f(T,X) = \prod_{i=1}^s \tilde f_i$, then  $\gal(f,K(X)) \cong \gal(f,M(X)) \cong \gal(f,K_s(X)) \cong \prod_{i=1}^s S_{d_i}$, where the $i$th coordinate permutes the roots of $\tilde f_i$, for every $i$. Therefore 
\[
\E(f,M) = \Big(\nu \colon \gal(M) \to {\prod_{i=1}^s S_{d_i}}, \alpha\colon {\prod_{i=1}^s S_{d_i}}\to1\Big). 
\]
Let $M_i/M$ be a separable extension of degree $d_i$, for every $i=1,\ldots, s$. Then $\gal(M)$ acts transitively on $Hom_{M}(M_i,M_s)$, which is a set of cardinality $d_i$. So it induces a homomorphism $\eta\colon \gal(M) \to \gal(f(T,X), M(X)) \cong {\prod_{i=1}^r S_{d_i}}$ that acts transitively on the roots of $\tilde{f}_i(T,X)$, for all $i$. Then the assumptions of Theorem~\ref{thm:PACEXT} are satisfied (see also Remark~\ref{rem:act}). We thus get infinitely many specializations $X\mapsto b_0\in K$ such that $\tilde f_i(T,b_0) = f_i(T,aT + b)$ is irreducible, for every $i$, where $a=\frac{a_1}{a_4 b_0 + a_5}$ and $b= \frac{a_2 b_0 + a_3}{a_4 b_0 + a_5}$. 
\qed

\subsection{Theorem~\ref{thm:geo} for large finite fields} 
\label{sec:geom_ff}
We show how Theorem~\ref{thm:geo} implies the following theorem of Bender-Wittenberg.
\begin{theorem}[Bender-Wittenber]
Let $A,B$ be positive integers, $p$ a prime, $q$ a power of $p$, and let $f_1(T,X), \ldots, f_s(T,X)\in \mathbb{F}_q[T,X]$ be irreducible polynomials of respective total degrees $d_1, \ldots, d_s$ such that $\sum d_i\leq B$. Assume 
\begin{enumerate}
\item $p\nmid d_i(d_i-1)$, for all $i$,
\item the Zariski closure $C_i$ of the affine curve $\{f_i=0\}$ in $\mathbb{P}^2$ is smooth, and
\item $q\gg A,B$. 
\end{enumerate}
Then there exist at least $A$ pairs $(a,b)\in \mathbb{F}_q^2$ for which all $f_i(T,aT+b)$ are irreducible.
\end{theorem}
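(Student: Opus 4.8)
The plan is to deduce this statement from Theorem~\ref{thm:geo} by combining it with Ax's theorem on the elementary theory of finite fields, exactly in the spirit of the remark following Theorem~\ref{thm:geo}. The key point is that a pseudo finite field $K$ (an infinite model of the theory of finite fields) is PAC with $\gal(K) = \widehat{\mathbb Z}$, so it possesses separable extensions of every finite degree; in particular, taking $M = K$, the field $K$ is its own PAC extension with a separable extension of degree $d_i$ for every $i$. Thus Theorem~\ref{thm:geo} applies to any pseudo finite field and yields infinitely many $(a,b)\in K^2$ with all $f_i(T,aT+b)$ irreducible in $K[T]$.

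First I would fix the numerical data: for each choice of $p, q, d_1, \ldots, d_s$ with $\sum d_i \le B$ and $p \nmid d_i(d_i-1)$, the existence of at least $A$ pairs $(a,b)$ with all $f_i(T,aT+b)$ irreducible is a first-order property of the field $\mathbb F_q$ in the language of rings, once one also encodes the hypotheses that each $f_i$ is irreducible and that each projective closure $C_i$ is smooth. This requires a uniform bound: the coefficients of the $f_i$ are bounded in number (since $\deg f_i \le d_i \le B$), the conditions ``$f_i$ irreducible'', ``$C_i$ smooth'', ``$f_i(T,aT+b)$ irreducible'' are all first-order (irreducibility of a bounded-degree polynomial is expressible because its possible factors have bounded degree, and smoothness of a bounded-degree plane curve is a condition on the vanishing of the partials having no common zero on $C_i$, again bounded). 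So for each fixed $(p, q, d_1,\ldots,d_s)$ there is a single sentence $\sigma_{A,B,p,q,\mathbf d}$ in the language of rings which holds in a field $k$ of characteristic $p$ iff: for all tuples of irreducible $f_i$ of the prescribed degrees with smooth $C_i$, there exist at least $A$ pairs $(a,b)$ making all $f_i(T,aT+b)$ irreducible.

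Next I would argue by contradiction. Suppose the theorem fails for some fixed $A, B$ and residue characteristic $p$. Then there is an infinite sequence of prime powers $q_j \to \infty$ (all powers of $p$, or one can even let $p$ vary — but it is cleanest to fix $p$) and for each $j$ a counterexample over $\mathbb F_{q_j}$, i.e.\ $\mathbb F_{q_j} \models \neg\sigma_{A,B,p,q_j,\mathbf d_j}$ for suitable degree data. Since the degree data ranges over a finite set, after passing to a subsequence we may assume $\mathbf d_j = \mathbf d$ is constant, so $\mathbb F_{q_j} \models \neg\sigma_{A,B,p,\mathbf d}$ for all $j$. By Ax's theorem \cite{Ax1968}, any nonprincipal ultraproduct $K = \prod_j \mathbb F_{q_j}/\mathcal U$ is a pseudo finite field of characteristic $p$, and by {\L}o\'s's theorem $K \models \neg\sigma_{A,B,p,\mathbf d}$. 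But $K$ is PAC with $\gal(K)=\widehat{\mathbb Z}$, hence has separable extensions of every degree, so Theorem~\ref{thm:geo} (with $M=K$) gives \emph{infinitely many} — in particular at least $A$ — pairs $(a,b)\in K^2$ with all $f_i(T,aT+b)$ irreducible, for any admissible tuple $(f_i)$ over $K$; that is, $K \models \sigma_{A,B,p,\mathbf d}$, a contradiction. Therefore the conclusion holds for all $q$ sufficiently large relative to $A$ and $B$.

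The main obstacle, and the step deserving the most care, is the first-order encoding: one must check that ``$C_i$ is smooth'', ``$f_i$ is irreducible over $k$'', and ``$f_i(T,aT+b)$ is irreducible over $k$'' are all expressible by a single formula whose complexity depends only on $B$ (and $p$), uniformly in $q$ and in the coefficients. Irreducibility is the delicate one, since a priori it quantifies over all lower-degree polynomials, but the degrees of potential factors are bounded by $d_i \le B$, so only finitely many coefficient-tuples are involved and the statement becomes a bounded block of quantifiers over $k$; smoothness of a degree-$d_i$ plane curve is the non-vanishing, simultaneously with $f_i$, of the three partial derivatives having no common projective zero, again a bounded condition (and here the hypothesis $p \nmid d_i(d_i-1)$ is what is used in Theorem~\ref{thm:geo} itself to guarantee the Galois group computation, not in the logic). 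Once this uniform definability is in place, the ultraproduct/{\L}o\'s argument is routine and the finiteness of the degree data takes care of the dependence on $\mathbf d$. I would also note that the implicit constant ``$q \gg A, B$'' is ineffective, as is typical for arguments routed through Ax's theorem; making it effective would require a different, quantitative input and is not attempted here.
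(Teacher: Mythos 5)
Your proposal is correct and follows essentially the same route as the paper: encode the statement as a first-order sentence $\Sigma$ (uniform in the bound $B$ on the degrees), verify it over pseudo finite fields via Theorem~\ref{thm:geo} with $M=K$, and transfer to all sufficiently large finite fields by Ax's theorem. The only difference is presentational — you unwind Ax's transfer through an explicit ultraproduct/\L o\'s argument by contradiction, whereas the paper simply cites Ax's theorem to conclude that all but finitely many finite fields satisfy $\Sigma$.
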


\begin{proof}
We fix $A,B$. Then the following statement is elementary in the language of rings.
\begin{itemize}
\item[$\Sigma$:] Every family of irreducible polynomials $f_1(T,X), \ldots, f_s(T,X)\in K[T,X]$ of respective total degrees $d_1, \ldots, d_r$ such that $d_i(d_i-1) \neq 0$ in $K$, and the Zariski closure $C_i$ of the affine curve $\{f_i=0\}$ in $\mathbb{P}^2$ is smooth admits at least $A$ pairs $(a,b)\in K^2$ such that all $f_i(T,X)$ are irreducible. 
\end{itemize}
Let $K$ a pseudo finite field. In terms of PAC extensions this means that $K/K$ is a PAC extension, $K$ perfect, and $\gal(K) = \widehat{\mathbb{Z}}$. In particular, $K$ has a separable extension of degree $n$, for every $n\geq 1$. So, by Theorem~\ref{thm:geo}, $K$ satisfies $\Sigma$. By Ax' theorem on the elementary theory of finite fields \cite[Proposition 20.10.4]{FriedJarden2008} we get that all but finitely many finite fields satisfies $\Sigma$, as needed. 
\end{proof}

\subsection{Proof of Theorem~\ref{thm:arth}}
Let $K$ be a field of characteristic $p\geq 0$, let $n\geq 1$ be an integer such that $n$ is odd if $p=2$, and let $f_1(X), \ldots, f_s(X)\in K[X]$ be non-associate irreducible separable polynomials with respective roots $\omega_1, \ldots, \omega_s$. Assume there exists a PAC extension $M/K$ such that $M(\omega_i)$ has a degree $n$ separable extension, for every $i=1,\ldots, s$. We need to prove that there exists a Zariski dense set of $(a_1,\ldots, a_n)\in K^n$ such that for $g(T) = T^n + a_1 T^{n-1} + \cdots + a_n$ all $f_i(g(T))$ are irreducible.

Let $f=f_1 \cdots f_s$, let $\bfA = (A_1, \ldots, A_n)$ be an $n$-tuple of algebraically independent variables and let 
\[
\mathcal{G}(\bfA,T) = T^n + A_1T^{n-1} + \cdots + A_n. 
\]
Let $F$ be the splitting field of $f\circ \mathcal{G}(\bfA,T)$ over $K(\bfA)$ and $L$ be the splitting field of $f$ over $K$. 
Then since $n$ is odd if $p=2$,  \cite[Proposition 3.6]{Bary-SorokerIrrVal} gives that $F$ is regular over $L$ and  
\[
\gal(F/ K(\bfA)) \cong S_n \wr_{\Omega} \gal(L/K),
\]
as permutation groups. 
Here the LHS acts on the set $\Phi$ of roots of $f\circ \mathcal{G}(\bfA,T)$ in some algebraic closure of $K(\bfA)$, $S_n\wr_{\Omega} \gal(L/K) \cong S_n^{\Omega}\rtimes \gal(L/K)$ is the permutational wreath product that acts on the set $\{1,\ldots, n\} \times \Omega$, where $\Omega$ is the set of roots of $f$.  

Similarly $\gal(f\circ \mathcal{G}(\bfA,T), M(\bfA))  = \gal(FM/M(\bfA)) = S_n\wr_\Omega \gal(N/M)$, where $N =LM$ is the splitting field of $f$ over $M$. 
So 
\[
\E(f\circ \mathcal{G}, M) = (\nu\colon \gal(M) \to \gal(N/M), \alpha\colon S_n\wr_{\Omega} \gal(N/M) \to \gal(N/M)),
\]
where $\alpha$ is the quotient map. Note that $\ker\alpha = S_n^\Omega$. 

Let $\Omega_1, \ldots, \Omega_S$ be the $\gal(N/M)$-orbits of $\Omega$, so $S\geq s$. Assume that $\omega_i\in \Omega_i$, for $i=1,\ldots, s$. 
By assumption, for each $i=1,\ldots, s$, we have a tower of separable extensions $M\subseteq M(\omega_i) \subseteq M_i$ and $[M_i:M(\omega_i)]=n$. For $i=s+1, \ldots, S$, let $M_i = M(\omega_i)$, for some $\omega_i\in \Omega_i$. 

Let $R$ be the minimal Galois extension of $M$ that contains all $M_i$. Then by \cite[Lemma~3.7]{Bary-SorokerIrrVal} we have a homomorphism $\rho \colon \gal(R/M)\to S_n\wr_\Omega \gal(N/M)$ such that $\alpha(\rho(\sigma)) = \sigma|_{N}$ and if we denote by $H_0$ the image of $\rho$, then $H_0$ acts transitively on $\{1,\ldots, n\}\times \Omega_i$, for $i=1,\ldots, s$. 

Let $C$ be the stabilizer of $\{1,\ldots, n\} \times  \Omega_i$ in $S_n\wr_\Omega\gal(N/M)$. Then $\ker\alpha = S_n^\Omega \leq C$ and $H_0\leq C$. We have
\[
(H_0\cap C)(1,\omega_i) = H_0 (1,\omega_i) = \{1,\ldots, n\}\times\Omega_i = C(1,\omega_i). 
\]
By Theorem~\ref{thm:PACEXT}, there exists a Zariski dense set  of $\bfa\in K^n$ such that all $f_i(g(T))$ are irreducible, where $g(T) = \mathcal{G}(\bfa,T) = T^n+a_1T^{n-1} + \cdots + a_n$.  \qed

\subsection{Theorem~\ref{thm:arth} for large finite fields}
An argument similar to that used in Section~\ref{sec:geom_ff} can be applied to deduce a result for large finite fields out of Theorem~\ref{thm:arth}. In \cite{Bary-SorokerIrrVal} a more precise statement was proved for pseudo finite fields, that gives the stronger result over finite fields that was stated in Remark~\ref{rem:PACEXT}. 

\subsection{Proof of Theorem~\ref{thm:Hilb}}
Let $K$ be a field. Assume that for infinitely many $n\geq 1$ there exists a PAC extension $M/K$ with $\gal(M)$ free of rank $\geq n$. We have to show that $K$ is Hilbertian.  

Let $f(T,X)\in K[T,X]$ be an irreducible polynomial. Let $G = \gal(f(T,X), K(T))$. By assumption, there exists a PAC extension $M/K$ such that $\gal(M)$ is a free profinite group of rank $r\geq |G|$. In particular $r\geq {\rm rank} (\gal(f(T,X),M(T)))$, so the associated embedding problem
\[
\E(f,M) = (\nu\colon \gal(M) \to \gal(N/M), \alpha\colon \gal(f(T,X),M(T))\to \gal(N/M))
\]
is properly solvable \cite[Proposition 17.7.3 and Theorem 24.8.1]{FriedJarden2008}, so by Theorem~\ref{thm:PACEXT} (see Remark~\ref{rem:PACEXT}) there exists an element $a\in K$ for which $f(a,X)$ is irreducible. Thus $K$ is Hilbertian. \qed

\bibliographystyle{amsplain}
\providecommand{\bysame}{\leavevmode\hbox to3em{\hrulefill}\thinspace}
\providecommand{\MR}{\relax\ifhmode\unskip\space\fi MR }
% \MRhref is called by the amsart/book/proc definition of \MR.
\providecommand{\MRhref}[2]{%
  \href{http://www.ams.org/mathscinet-getitem?mr=#1}{#2}
}
\providecommand{\href}[2]{#2}

\end{document}